\newtheorem{theorem}{Theorem}[section]
\newtheorem{cor}[theorem]{Corollary}
\newtheorem{prop}[theorem]{Proposition}
\theoremstyle{definition}
\newtheorem{example}[theorem]{Example}
\theoremstyle{remark}
\newtheorem{remark}[theorem]{Remark}
\numberwithin{equation}{section}
\newcommand{\R}{\mathbb{R}}
\newcommand{\Z}{\mathbb{Z}}
\DeclareMathOperator{\supp}{supp}
\def\ds{\displaystyle}
\newcommand{\matrice}[2]{\left(  \begin{array}{#1}#2  \end{array}\right)}
\def\F{\mathcal{F}}
\def\W{\mathcal{W}}
\def\S{\mathcal{S}}
\def\e{\varepsilon}
\DeclareMathOperator{\m}{m}
\begin{document}

\title{On measures driven by Markov chains}

\author{Yanick Heurteaux}
\address{Laboratoire de Math\'ematiques, Clermont Universit\'e\\
Universit\'e Blaise Pascal and CNRS UMR 6620\\ 
BP 80026, 63171 Aubi\`ere, France}
\email{heurteau@math.univ-bpclermont.fr}

\author{Andrzej Stos}
\address{Laboratoire de Math\'ematiques, Clermont Universit\'e\\
Universit\'e Blaise Pascal and CNRS UMR 6620\\ 
BP 80026, 63171 Aubi\`ere, France}
\email{stos@math.univ-bpclermont.fr}

\subjclass[2010]{28A80, 60J10, 28D05}
\keywords{Multifractal formalism, Cantor sets, Hausdorff dimension, Markov chains}
\date{}

\begin{abstract}

We study measures on $[0,1]$ which are driven by a finite Markov chain and which generalize the famous Bernoulli products.
We propose a hands-on approach to determine the structure function $\tau$
and to prove that the multifractal formalism is satisfied. 
Formulas for the dimension of the measures and for the Hausdorff dimension of their supports are also provided.

\end{abstract}

\maketitle

\section{Introduction}

Multifractal measures on $\R^d$ are measures $\m$ for which the level sets
$$E_\alpha=\left\{x\in\R^d\ ;\lim_{r\to 0}\frac{\log\m(B(x,r))}{\log r}=\alpha\right\}$$
are non-trivial for at most two values of the real $\alpha$. In practice, it is impossible to completely describe the sets $E_\alpha$, but we can try to calculate their Hausdorff dimensions. To this end, Frisch and Parisi (\cite{FP85}) were the first to use the Legendre transform of a structure function $\tau$. A mathematically rigorous approach was given by Brown, Michon and Peyri\`ere in \cite{BMP} and by Olsen in \cite{O95}. There are many situations in which the Legendre transform formula, now called multifractal formalism, is satisfied. For a comprehensive account see see e.g. \cite{CM92}, \cite{FL02}, \cite{FO03}, \cite{T06} or \cite{H07}.

A fundamental model for multifractal measures is given by so called Bernoulli products, see for example Chapter 10 of \cite{F2}. Roughly speaking, if $I_{\e_1\cdots\e_n}$ are the $\ell$-adic intervals of the $n^{th}$ generation and $(X_n)$ is an i.i.d. sequence of random variables, then Bernoulli product $\m$ can be defined by 
$$\m(I_{\e_1\cdots\e_n})=P[X_1=\e_1,\cdots,X_n=\e_n].$$ 

The purpose of this paper is to provide an explicit analysis of a natural generalization of this model. Instead of an i.i.d. sequence, we consider an irreducible homogeneous Markov chain. Consequently, the measure $\m$ satisfies the recurrence relation
$$\m(I_{\e_1\cdots\e_{n+1}})=p_{\e_n\e_{n+1}}\m(I_{\e_1\cdots\e_n}),$$
where $P=(p_{ij})$ is the transition matrix of $X_n$ (see the next section for full  details).

In Section \ref{sectau}, we identify a formula for the structure function $\tau$ of such a measure and we compute its dimension. 
Section \ref{multi} contains a construction of auxiliary (Gibbs) measures. While it involves a nontrivial rescaling, we insist on the fact that our results don't require sophisticated tools but only some fundamental results of multifractal analysis and the use of Perron-Frobenius theorem.
We are then able to prove that the multifractal formalism is satisfied and we give a formula for the Hausdorff dimension of the (closed) support of the measures.
Finally, in Section \ref{secergodic} we discuss ergodic properties and prove that for a given support $K$, the measure $\m$ with maximal dimension is essentially unique. 

\section{Preliminaries}\label{prelim}
Set $\S=\{0,1,2, \ell-1\}$.
Let $\W_n$ be set of words of length $n$ over the alphabet $\S$.
A concatenation of two words $\e=\e_1\cdots\e_n\in\W_n$ and 
$\delta=\delta_1\cdots\delta_k\in\W_k$ 
will be denoted by $\e\delta=\e_1\cdots\e_n\delta_1\cdots\delta_k$.
Let $\F_0=\{ [0,1)\}$ and for $n\ge 1$, $\F_n$ be the set of  $\ell$-adic intervals of order $n$, that is
the family of intervals of the form
\[ 
I_\e=I_{\e_1\cdots \e_n}=\left[ \sum_{i=1}^n \frac{\e_i}{\ell^n}, 
\sum_{i=1}^n \frac{\e_i}{\ell^n}+\frac{1}{\ell^n} \right)
\]
where $\e=\e_1\cdots\e_n\in \W_n$. 
 If $I=I_\e\in\F_n$ and $J=I_\delta\in\F_k$, we will write $IJ=I_{\e\delta}$. Note that $IJ\subset I$. Finally, we will denote by $I_n(x)$ the unique interval $I\in\F_n$ such that $x\in I$ and $\vert I\vert$ the length of the interval $I$.

Consider a discrete Markov chain $X=(X_k)_{k\ge 1}$ on $\S$ 
with an initial distribution $p_i=P(X_1=i)$, $i\in \S$, and 
the transition probabilities $P=(p_{ij})_{i,j=0}^{\ell-1}$ where
$p_{ij}=P(X_{n+1} = j \: |\:  X_{n}=i)$. In order to exclude degenerated cases, we will suppose that $p_{ij}\not=1$ for any $(i,j)\in\S\times\S$. 
Note that for the sake of coherence with the definition of $\F_n$, 
the entries of matrices and vectors will be indexed by numbers starting from 0. 

If necessary, we will assume that $X$ is irreducible, that is
for all $i,j\in S$ there exists $k\ge 2$ such that $P(X_k=j | X_1=i) >0$.

For a matrix $P=(p_{ij})_{i,j\in\S}$ we denote by $P^n$ the usual matrix power.
This should not be confused with $P_q$, $q\in\R$, which stands for 
the matrix whose entries are given by $(p_{ij}^q)_{i,j\in\S}$. 
In this context, by convention we will set $0^q=0$ for any $q\in\R$.

The Hausdorff dimension (box dimension, respectively) of a set $E$ 
will be denoted by $\dim_H E $ ($\dim_B E$). 
Recall the definitions of the lower and upper dimension of a probability 
measure $\mu$ on $\R^d$:
\begin{align*}
\dim_*\mu & = \inf\{\dim_H(E) : \mu(E)>0  \} = \sup\{s>0 : \mu\ll\mathcal{H}^s \} \\ 
\dim^* \mu & = \inf\{\dim_H(E) : \mu(E)=1 \} = \inf\{s>0 : \mu\perp\mathcal{H}^s \}
\end{align*}
where $\mathcal{H}^s$ denotes the Hausdorff measure in dimension $s$.
If these two quantities agree, the common value is called the dimension of $\mu$
and denoted by $\dim \mu $. In this case the measure is said to be {\it unidimensional}.
For more details, we refer the reader to e.g. \cite{H07}.

By $c$ or $C$ we will denote a generic positive constant whose exact value is not important 
and may change from line to line.
For functions or expressions $f$ and $g$, depending on a variable $x$, say,
we will write $f\asymp g$ if there exists a constant $c$ which does not depend on $x$ and such that 
$c^{-1}g(x) \le f(x) \le cg(x)$ for any admissible $x$.

\section{The measures and the structure function $\tau$}\label{sectau}

Let $(X_n)_{n\ge 1}$ be a Markov chain on $\S$ with initial distribution $(p_i)_{i\in\
S}$ and transition matrix $P=(p_{ij})$. Define the measure $\m$ as follows:
\[ 
    \m(I_{\e_1\cdots\e_n}) = P(X_1=\e_1,\ldots, X_n=\e_n).
\]
Then $\m(I_i)=p_i$, $i\in \S$, and by the Markov property,
\begin{align*}
    \m(I_{\e_1\cdots\e_n}) &= 
    P(X_n=\e_n\:|\: X_{n-1}=\e_{n-1}\cdots X_1=\e_1)
    P(X_{n-1}=\e_{n-1}\cdots X_1=\e_1)\\
    & = 
    p_{\e_{n-1}\e_n} \m(I_{\e_1\cdots\e_{n-1}}).
\end{align*}
Iterating, we get 
\begin{equation}\label{m_i}
    \m(I_{\e_1\cdots\e_n}) =  p_{\e_1}p_{\e_1\e_2}\cdots p_{\e_{n-1}\e_n}.
\end{equation}
In other words, a finite trajectory of $X$ selects an interval and assigns it
a mass equal to the probability of the trajectory. The measure is well defined 
since for any given $\ell$-adic interval $I$ there is exactly one path to it 
and its subintervals can be reached  only through $I$.

Because of the additivity property $\m(I_{\e_1\cdots\e_n})=\sum_{k=0}^{n-1}\m(I_{\e_1\cdots\e_{n-1}k})$ and the property $\lim_{n\to +\infty}\m(I_{\e_1\cdots\e_n})=0$, it is well known that formula \eqref{m_i} defines a probability Borel measure whose support is contained in $[0,1]$ (see for example \cite{T08}). Moreover the measure $\m$ is such that $\m(\{x\})=0$ for any point $x$.  

The construction proposed here can be viewed as a generalization of a classical Bernoulli measure. Our goal is to give a hands-on approach to the multifractal analysis of such measures.

\begin{example}\label{Bern}
\begin{enumerate}
\item Natural measure on the triadic Cantor set.
        Let 
        \[ P = \frac{1}{2}\matrice{ccc}{1 & 0 & 1 \\ 1 & 0 & 1 \\ 1 & 0 & 1 } \]
        and set the initial distribution $(p_0,p_1,p_2)=(1/2,0,1/2)$. 
        Then $\mathcal C=\supp \m$ is the ternary Cantor set
        and $\m$ is the normalized ($\log_3 2$)-Hausdorff measure on $\mathcal C$.

    \item Bernoulli measures.
        Let $p=(p_0,\cdots ,p_{\ell-1})$ be a probability vector and suppose that $(X_i)$ are i.i.d. random variables 
        with $P(X_1=j) = p_j$. By independence
        $p_{ij}= p_j$ so that
        \[P=\matrice{cccc}{p_0 & p_1 & \cdots & p_{\ell -1} \\
	  \vdots & & & \vdots\\
	  \vdots & & & \vdots\\
	  p_0 & p_1 & \cdots & p_{\ell -1}}. \]
        Hence, by \eqref{m_i} we get 
        $\m(I_{\e_1\cdots\e_n}) = p_{\e_1}\cdots p_{\e_n}$,
	and the measure $\m$ is the classical Bernoulli product.

    \item Let $\ell=2$, $p\not=1/2$ and
	\[P=\matrice{cc}{p &1-p  \\ 1-p & p } .\]
	The associated measure $\m$ was introduced by Tukia in \cite{T89}. It is a doubling measure with dimension $\dim(\m)=-(p\log_2 p+(1-p)\log_2(1-p))<1$. The associated repartition function $f(x)=\m([0, x])$ is a singular quasisymetric function.
    \item Random walk on $\Z_\ell$.  Let $p_{ij}=1/2$ if $|i-j|= 1$ 
        or $(i,j)=(0,\ell-1)$ or $(i,j) = (\ell-1,0)$.
        If, for example, $n=3$, 
        \[ P = \frac{1}{2}\matrice{ccc}{0 & 1 & 1 \\ 1 & 0 & 1 \\ 1 & 1 & 0 }. \]
        As we will see later, the associated measure is monofractal with dimension $\frac{\log 2}{\log \ell}$.

\end{enumerate}
\end{example}

\vskip 0.5cm
Define as usual the structure function $\tau(q)$ by 
\begin{equation}\label{tau}
    \tau(q) = \limsup_{n\to\infty} \frac{1}{n\log \ell}
    \log\left( \sum_{I\in\F_n}\m(I)^q \right),
\end{equation}
with the usual convention $0^q=0$ for any $q\in\R$.
\begin{theorem}\label{tau-lambda}
    Let $\m$ be a probability measure driven by a Markov chain with transition matrix $P$. Suppose that the matrix $P$ is irreductible. For $q\in\R$, let $\lambda_q$ be the spectral radius of $P_q$. 
    Then $\tau(q) = \log_{\ell}(\lambda_q)$ and the limit does exist in \eqref{tau}.
\end{theorem}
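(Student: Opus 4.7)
\smallskip

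\noindent\textbf{Proof plan for Theorem \ref{tau-lambda}.}

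The plan is to rewrite the sum $\sum_{I\in\F_n}\m(I)^q$ as an explicit bilinear form in $P_q$, and then let Perron--Frobenius do the work. By \eqref{m_i} and the convention $0^q=0$,
\[
    \sum_{I\in\F_n}\m(I)^q \;=\; \sum_{\e_1,\ldots,\e_n\in\S} p_{\e_1}^q\,p_{\e_1\e_2}^q\cdots p_{\e_{n-1}\e_n}^q \;=\; u^{T} P_q^{\,n-1}\mathbf{1},
\]
where $u\in\R^\ell$ is the nonnegative, non-zero vector with coordinates $u_i=p_i^{q}$ and $\mathbf{1}=(1,\ldots,1)^T$. So the entire problem reduces to estimating $u^T P_q^{\,n-1}\mathbf{1}$.

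Next I would observe that, because of the convention $0^q=0$, the matrices $P$ and $P_q$ share the same zero pattern; hence the irreducibility hypothesis on $P$ implies $P_q$ is an irreducible nonnegative matrix. Perron--Frobenius then provides $\lambda_q>0$, a simple eigenvalue equal to the spectral radius, together with a strictly positive right eigenvector $h$ such that $P_q h = \lambda_q h$. The point of using $h$ (rather than iterating $P_q^n/\lambda_q^n$ directly) is that it sidesteps any periodicity of $P_q$: by positivity of $h$ one can choose constants $c,C>0$ with
\[
    c\,h \;\le\; \mathbf{1} \;\le\; C\,h \qquad\text{componentwise,}
\]
and since $P_q$ has nonnegative entries, applying $P_q^{\,n-1}$ preserves these inequalities, giving
\[
    c\,\lambda_q^{\,n-1}\,h \;\le\; P_q^{\,n-1}\mathbf{1} \;\le\; C\,\lambda_q^{\,n-1}\,h.
\]

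Taking the inner product with $u$ (which is nonnegative and non-zero, so that $u^T h>0$) yields $u^{T} P_q^{\,n-1}\mathbf{1}\asymp \lambda_q^{\,n-1}$, with constants independent of $n$. Dividing $\log$ by $n\log\ell$ shows that the limsup in \eqref{tau} is in fact a limit equal to $\log_\ell \lambda_q$, which proves the theorem.

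The one point that requires care, and which I see as the main obstacle, is that $u$ and $\mathbf{1}$ may have coordinates in directions associated with periodic cyclic classes of $P_q$, so the naive guess $P_q^n/\lambda_q^n\to$ (rank-one) fails when $P_q$ is merely irreducible and not primitive. The sandwich argument via the positive eigenvector $h$ replaces the need for any limit theorem for $P_q^n$ and delivers the sharp two-sided bound uniformly in $n$, which is exactly what is needed to convert the $\limsup$ in the definition of $\tau(q)$ into a genuine limit.
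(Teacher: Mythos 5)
Your proposal is correct, and it follows the same overall strategy as the paper: express $\sum_{I\in\F_n}\m(I)^q$ as an iterate of $P_q$ applied to the initial vector $(p_i^q)_i$ (the paper writes this as $\Vert S_1P_q^{n-1}\Vert_1$, which equals your $u^TP_q^{n-1}\mathbf{1}$), and then obtain two-sided bounds of order $\lambda_q^{n}$ from the Perron--Frobenius eigenvector of the irreducible nonnegative matrix $P_q$. The one genuine difference is in how the two-sided bound is extracted. The paper works with the \emph{left} Perron eigenvector $\nu_q$ and compares it to $S_1$; since $S_1$ may have zero entries, the lower bound requires an extra step, namely choosing $k$ with $I+P_q+\cdots+P_q^k>0$ so that $\nu_q\prec C(S_1+S_1P_q+\cdots+S_1P_q^k)$. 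You instead sandwich the strictly positive vector $\mathbf{1}$ between multiples of the \emph{right} Perron eigenvector $h$, so both bounds are immediate, and the possible zeros of $u$ only enter at the very end through the trivial observation that $u^Th>0$. This is a small but real simplification: it removes the auxiliary irreducibility lemma entirely, while using exactly the same input (Perron--Frobenius for irreducible nonnegative matrices, no primitivity needed). Your closing remark about periodicity is also on point -- both your argument and the paper's deliberately avoid any convergence statement for $P_q^n/\lambda_q^n$ for precisely that reason.
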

\begin{proof}
    Let $\W_{n,k}$ be the subset of $\W_n$ consisting of words that end with 
    $k\in\S$. Set 
    \[ 
        s_{n,k} = \sum_{\e\in \W_{n,k}} \m(I_\e)^{q}  
    \]
    and let $S_n$ be the (line) vector $(s_{n,0},\ldots,s_{n,\ell-1})$.
    In particular, $\W_{1,k} = \{k\}$ and $S_1=(p_0^q,\ldots,p_{\ell-1}^q)$.
    We claim that
    \begin{equation}\label{S_n}
        S_nP_q = S_{n+1}, \qquad n\ge 1.
    \end{equation}
    Indeed, the $j^\text{th}$ coordinate of $S_nP_q$ is given by
    \begin{equation*}\label{jth}
        \sum_{k\in\S}s_{n,k}  p_{kj}^q 
         = \sum_{k\in\S} \sum_{\e\in \W_{n,k}} p_{kj}^q \m(I_\e)^{q}  
    \end{equation*}
    Using the Markov property, we have $p_{kj} \m(I_\e)=\m(I_{\e j})$ when $\e\in \W_{n,k}$. It follows that 
    \[
        \sum_{k\in\S}s_{n,k}  p_{kj}^q =\sum_{k\in\S} \sum_{\e\in \W_{n,k}} \m(I_{\e j})^q 
        = \sum_{\e\in \W_{n+1,j}} \m(I_{\e})^q=s_{n+1,j}
    \]
    which is the $j^\text{th}$ coordinate of $S_{n+1}$. So that \eqref{S_n} follows.
    Iterating, we obtain 
    \begin{equation}\label{S_1}
        S_n = S_1\left(P_q\right)^{n-1}.
    \end{equation}
    Now, observe that 
    \[
        \sum_{\e\in \W_{n}} \m(I_\e)^q
        = 
        \sum_{k\in\S} s_{n,k} 
        =  
        \Vert S_n\Vert_1 
        =  
        \Vert S_1P_q^{n-1}\Vert_1  
    \]
    where $\Vert\ \Vert_1$ is the $\ell^1$ norm in $\R^\ell$.
    It follows that 
    \begin{align*}
        \tau(q) 
        & = 
        \limsup_{n\to\infty} \frac{1}{n}\log_{\ell}
        \left(\sum_{\e\in W_n} \m(I_\e)^q\right) \\
        & = 
        \limsup_{n\to\infty} \frac{1}{n}\log_{\ell} \| S_1P_q^{n-1}\|_1\\
        & = 
        \limsup_{n\to\infty} \frac{1}{n}\log_{\ell} \| S_1P_q^{n}\|_1 
    \end{align*}
    Let us now introduce the following notation. If $a=(a_0,\cdots,a_{\ell-1})$ and $b=(b_0,\cdots,b_{\ell-1})$ are two vectors in $\R^\ell$, we will write $a\prec b$ when $a_i\le b_i$ for any value of $i$.
    Observe in particular that that $S_1\succ 0$ and is not identically equal to 0. Let $\lambda_q$ be the spectral radius of the matrix $P_q$, which is also the spectral radius of the transposed matrix $P_q^t$. The matrix $P_q$ being positive and irreductible, Perron-Frobenius Theorem ensures the existence of an eigenvector $\nu_q$ with strictly positive entries, satisfying $\nu_qP_q=\lambda_q\nu_q$. Therefore,  there exists a constant $C>0$ such that $S_1\prec C\nu_q$. It follows that
    $$\Vert S_1P_q^{n}\Vert_1\le C\Vert \nu_qP_q^n\Vert_1=C\Vert\nu_q\Vert_1\lambda_q^n.$$
    On the other hand, using the irreductibility property of the matrix $P_q$, we can find an integer $k$ such that the matrix $I+P_q+\cdots +P_q^k$ has strictly positive entries. It follows that the line vector $S_1+S_1P_q+\cdots +S_1P_q^k$ has strictly positive entries and we can find a constant $C>0$ such that 
    $$\nu_q\prec C\left(S_1+S_1P_q+\cdots +S_1P_q^k\right).$$
    So
    \begin{align*}
	\Vert \nu_qP_q^n\Vert_1
	& \le
	C\Vert S_1P_q^n+S_1P_q^{n+1}+\cdots + S_1P_q^{n+k}\Vert_1\\
	& =
	C\Vert \left(S_1P_q^n\right)\left( I+P_q+\cdots P_q^k\right)\Vert_1\\
	& \le
	C'\Vert S_1P_q^n\Vert_1.
    \end{align*}
    Finally, $\Vert S_1P_q^{n}\Vert_1\asymp \lambda_q^n$.Taking the logarithm, we can conclude that $\tau(q)=\log_{\ell}(\lambda_q)$ and that the limit exists.

\end{proof}

\begin{cor}\label{analytic}
The function $\tau$ is analytic on $\R$.
\end{cor}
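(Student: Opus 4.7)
The plan is to show that $q\mapsto\lambda_q$ is real analytic on $\R$; since $\lambda_q>0$ (the Perron eigenvalue of a nonzero irreducible nonnegative matrix is strictly positive), the corollary then follows from $\tau(q)=\log_\ell(\lambda_q)$ and the analyticity of $\log$ on $(0,+\infty)$.

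First, I would observe that each entry of $P_q$ is an analytic function of $q$. Indeed, for $p_{ij}=0$ the convention gives $p_{ij}^q=0$, while for $p_{ij}>0$ we have $p_{ij}^q=\exp(q\log p_{ij})$, which is entire. A crucial consequence is that the zero pattern of $P_q$ coincides with that of $P$ for every $q\in\R$, so the irreducibility of $P$ is inherited by $P_q$. Hence the Perron–Frobenius theorem applies to each $P_q$, and $\lambda_q$ is a \emph{simple} eigenvalue of $P_q$.

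Next, fix $q_0\in\R$. The characteristic polynomial
\[
\chi_q(\lambda)=\det(\lambda I-P_q)
\]
has coefficients that are polynomials in the entries of $P_q$, hence analytic in $q$; thus $(q,\lambda)\mapsto\chi_q(\lambda)$ is jointly real analytic. Simplicity of $\lambda_{q_0}$ as a root of $\chi_{q_0}$ means $\partial_\lambda\chi_{q_0}(\lambda_{q_0})\neq 0$. The analytic implicit function theorem then produces an analytic function $q\mapsto\widetilde\lambda(q)$ defined on a neighborhood of $q_0$, with $\widetilde\lambda(q_0)=\lambda_{q_0}$ and $\chi_q(\widetilde\lambda(q))=0$.

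It remains to identify $\widetilde\lambda(q)$ with the Perron eigenvalue $\lambda_q$ locally. This follows from the fact that the Perron root of an irreducible nonnegative matrix is isolated (simple) in the spectrum and depends continuously on the matrix entries (a standard fact, e.g.\ from upper semicontinuity of the spectrum combined with simplicity); thus for $q$ close enough to $q_0$, $\widetilde\lambda(q)$ is the unique eigenvalue of $P_q$ close to $\lambda_{q_0}$, forcing $\widetilde\lambda(q)=\lambda_q$. Since $q_0$ was arbitrary, $q\mapsto\lambda_q$ is analytic on $\R$. The only subtle point is this matching step — verifying that the analytic branch produced by the implicit function theorem is indeed the Perron branch and not some other eigenvalue — but the simplicity and strict separation guaranteed by Perron–Frobenius handles it cleanly.
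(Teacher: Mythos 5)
Your proof is correct and follows essentially the same route as the paper: both use the simplicity of the Perron eigenvalue $\lambda_{q_0}$ to apply the analytic implicit function theorem to the characteristic polynomial $F(q,\lambda)=\det(\lambda I-P_q)$. The only difference is that you spell out the step of identifying the implicit branch with the Perron root (via continuity and isolation of the spectrum), which the paper leaves implicit.
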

  \begin{proof}
This can be seen as a consequence of the Kato-Rellich theorem (see for example \cite{RS}). But in this finite dimensional context, there is an elementary proof. Let $F(q,x)=\det(P_q-xI)$ be the characteristic polynomial of $P_q$ and let $q_0\in\R$. Observing that $F(q_0,\lambda_{q_0})=0$ and $\frac{\partial F}{\partial x}(q_0,\lambda_{q_0})\not= 0$ (the eigenvalue $\lambda_{q_0}$ is simple), the map $q\mapsto\lambda_q$ is given arround $q_0$ by the implicit functions theorem. Moreover, $F$ being analytic in $q$ and $x$, it is well known that the implicit function is analytic.
  \end{proof}

The existence of $\tau'(1)$ ensures that the measure $\m$ is unidimensional (see e.g. \cite{H07}, Theorem 3.1).
\begin{cor}\label{tauprime}
 The measure $\m$ is unidimensional with dimension $\dim(\m )=-\tau'(1)$.
\end{cor}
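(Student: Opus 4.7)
The plan is to reduce the statement to a standard result from multifractal analysis. By Corollary~\ref{analytic}, the function $\tau$ is analytic on all of $\R$, so in particular $\tau'(1)$ exists. With this in hand, the theorem invoked in the paragraph preceding the statement (Theorem~3.1 of \cite{H07}) applies immediately and delivers unidimensionality with $\dim(\m) = -\tau'(1)$. So strictly speaking the proof is a one-liner once Corollary~\ref{analytic} is granted.

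For the reader's convenience I would briefly indicate how the cited abstract result plays out in our setting, to make the corollary self-contained. First, $P_1 = P$ is stochastic, so $\lambda_1 = 1$ and $\tau(1) = 0$; set $\alpha = -\tau'(1)$. By convexity of $\tau$ (a routine application of H\"older's inequality to the sums defining $\tau$), for every $\varepsilon > 0$ one can pick $q > 1$ close to $1$ with $\tau(q) \le -(q-1)(\alpha - \varepsilon)$. Theorem~\ref{tau-lambda} then gives
$$\int \m(I_n(x))^{q-1}\, d\m(x) \;=\; \sum_{I \in \F_n} \m(I)^q \;\le\; C\,\ell^{-n(q-1)(\alpha - \varepsilon)},$$
and Chebyshev combined with Borel-Cantelli yields $\liminf_n \frac{-\log \m(I_n(x))}{n\log\ell} \ge \alpha - \varepsilon$ at $\m$-almost every $x$. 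A symmetric argument with $0<q<1$ produces the matching upper bound, $\limsup_n \frac{-\log \m(I_n(x))}{n\log\ell} \le \alpha + \varepsilon$, $\m$-a.e.

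Letting $\varepsilon \to 0$ shows that the $\ell$-adic local dimension of $\m$ equals $\alpha$ almost everywhere. The only subtle point is the passage from the cylinder-type quantity $\m(I_n(x))$ to the ball $\m(B(x,r))$ for $r\asymp\ell^{-n}$, needed to identify the local dimension with its standard definition; but any such ball is covered by a bounded number of intervals of $\F_n$, so this is routine. The main conceptual input has already been supplied by Corollary~\ref{analytic}, which guarantees the existence (indeed analyticity) of $\tau'(1)$; the remaining work is bookkeeping around the critical exponent $q=1$.
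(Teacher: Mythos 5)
Your proof matches the paper's: the paper offers no argument beyond observing that $\tau'(1)$ exists (via Corollary~\ref{analytic}) and citing Theorem~3.1 of \cite{H07}, which is exactly your first paragraph, and your optional sketch of that cited result is a faithful reproduction of the standard Chebyshev--Borel--Cantelli argument around $q=1$. The only glib spot is the closing cylinder-to-ball remark: a ball $B(x,\ell^{-n})$ is covered by three intervals of $\F_n$, but the two neighbours of $I_n(x)$ may carry much larger measure, so bounding the \emph{number} of covering intervals does not by itself control $\m(B(x,r))$; fortunately that passage is unnecessary, since $\dim_*\m$ and $\dim^*\m$ can be read off directly from the $\ell$-adic local dimensions by Billingsley's lemma (net and ball Hausdorff dimensions coincide), which is how \cite{H07} proceeds.
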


Let us now describe some examples.
Let $h_\ell$ be the usual  entropy function 
\[ 
  h_\ell(p) = -\sum_{i=0}^{\ell-1} p_i\log_\ell p_i,\qquad p=(p_0,\ldots, p_{\ell-1}) 
  \text{ with }\sum_{i=0}^{\ell-1} p_{i}=1.
\]
In particular, set $h(x) = h_2(x,1-x) = -(x\log_2(x) + (1-x)\log_2(1-x))$.
\begin{example}
    If $\m$ is the Bernoulli measure from Example \ref{Bern} (2),
    then by Theorem \ref{tau-lambda} we get the well known formula for $\tau$
    \[\tau(q) = \log_\ell\left(p_0^q+\cdots +p_{\ell-1}^q\right).\] 
    Furthermore, the dimension of the measure $\m$ is $\dim(\m) = -\tau'(1)=h_\ell(p)$.
\end{example}

\begin{example}
    Actually, if $\ell=2$, we can obtain an explicit formula 
    for any given Markov chain.
    Suppose that $a,b\in(0,1)$ and \[P= \matrice{cc}{1-a & a \\ b & 1-b}.\]
    Then 
\begin{equation}\label{dim2}
    \dim(\m) = \frac{b}{a+b} h(a) + \frac{a}{a+b}h(b).
\end{equation}
    Indeed, by Theorem \ref{tau-lambda} we get 
    \[ 
    \tau(q) = -1+ \log_2 \left(
        (1-a)^q+(1-b)^q+\sqrt{ ((1-a)^q-(1-b)^q)^2+4a^qb^q } \right).
    \]
    Note that if $q=1$, then $\sqrt{ ((1-a)^q-(1-b)^q)^2+4a^qb^q }$ simplifies to 
    $a+b$. Thus, we obtain
    \begin{multline*}
    \tau'(1) = \frac{1}{2\log 2}  \Bigg( (1-a)\log (1-a)+(1-b)\log (1-b) \;+ \\ 
        +\frac{ (b-a)((1-a)\log (1-a) - (1-b)\log (1-b)) + 2ab\log ab}{a+b}
    \Bigg) 
    \end{multline*}
    Rearranging, we get \eqref{dim2}. Note that the coefficients in \eqref{dim2} come from 
    the stationary distribution of $X$:
    \[ \pi=\left( \frac{b}{a+b},  \frac{a}{a+b}\right).\]
    This observation will be generalized in Theorem \ref{dot-prod} below.
\end{example}

\begin{example}\label{uniform-cantor}
	Let $(a_0,\cdots,a_{\ell-1})$ be a probability vector (with possibly some entries that are equal to 0) and  $P$ be an $\ell\times \ell$ irreductible stochastic matrix with entries $a_0,\ldots, a_{\ell-1}$ in every row, but in an arbitrary order.
	Then $\tau(q) =  \log_\ell\left(a_0^q+\ldots+a_{\ell-1}^q\right)$ and $\dim(\m)=h_\ell\left(a_0,\cdots,a_{\ell-1}\right)$.
	In particular, if $\kappa$ is the number of $a_i$'s that are not equal to 0 and if each nonzero $a_i$ is equal to $1/\kappa$, we get $\dim(\m)=\frac{\log\kappa}{\log\ell}$ which is the maximal possible value and is also the dimension of the support of the measure $\m$. Such a remark will be generalized below.
\begin{proof}
    Set 	$\ds A_q = a_0^q+\cdots+ a_{\ell-1}^q $.
	We have
   \begin{align*}
      \sum_{\e\in \W_{n+1}} \m(I_\e)^q
      & =  \sum_{k\in\S} \sum_{\e\in\W_{n,k}} \sum_{j\in\S} p_{kj}^q\m(I_{\e })^q \\
      & =  A_q  \sum_{k\in\S} \sum_{\e\in\W_{n,k}} \m(I_{\e })^q \\ 
      & =  A_q\sum_{\e\in\W_n} \m(I_{\e })^q.  \\
    \end{align*}
    Iterating, we get
    \[  \sum_{\e\in \W_n} \m(I_\e)^q = A_q^{n-1} \sum_{\e\in \W_1} \m(I_\e)^q .  \]
    Consequently, 
    \[ \tau(q) =  \log_\ell\left(a_0^q+\ldots+a_{\ell-1}^q\right). \]
    It follows that  $ \tau'(1) = -h_\ell\left(a_0,\ldots,a_{\ell-1}\right) $.

If there are only $\kappa$ nonzero values in the probability vector $(a_0,\cdots,a_{\ell-1})$, say e.g. $a_0,\cdots,a_{\kappa-1}$, the formula turns to $\dim\m=-\sum_{j=0}^{\kappa-1}a_j\log_\ell a_j$ which is maximal when $a_j=1/\kappa$ for any $j$.
    \end{proof}  
\end{example}

Let us finish this part with a general formula for the dimension of the measure $\m$. That is the purpose of the following theorem.

\begin{theorem}\label{dot-prod}
	Denote by $L_k$ the $k^{\text{th}}$ line of the matrix $P$. 
	Let $H=\left(h_\ell(L_0),\ldots, h_\ell(L_{\ell-1})\right)$ be the vector of entropies of the lines of $P$. Then 
	\[ \dim(\m ) = <\pi\,\vert\, H>,\] 
	where $\pi$ is the stationary distribution of the Markov chain $X$ and $<\ \vert\ >$ is the canonical scalar product.
\end{theorem}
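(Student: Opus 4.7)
The plan is to compute $\tau'(1)$ explicitly via the Perron--Frobenius setup of Theorem \ref{tau-lambda} and then apply Corollary \ref{tauprime}. The key observation is that at $q=1$ the matrix $P_1=P$ is stochastic, so its Perron eigenvalue, eigenvectors and stationary distribution are known explicitly, which makes the implicit differentiation entirely concrete.

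First I would record the data at $q=1$: since $P$ has row sums equal to $1$, the spectral radius is $\lambda_1=1$, the right Perron eigenvector is the column vector $\mathbf{1}=(1,\ldots,1)^t$, and the left Perron eigenvector is precisely the stationary distribution $\pi$, normalized so that $\pi\cdot\mathbf{1}=1$. From Corollary \ref{analytic}, $q\mapsto\lambda_q$ is analytic and $\lambda_q$ is a simple eigenvalue of $P_q$, so standard perturbation theory gives analytic left and right eigenvectors $\nu_q$ and $u_q$ on a neighborhood of $q=1$ with $\nu_1=\pi$, $u_1=\mathbf{1}$, and $\nu_q u_q\equiv 1$.

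Next, I would differentiate the relation $\nu_q P_q=\lambda_q\nu_q$ in $q$ and apply the result on the right to $u_q$. Using $P_q u_q=\lambda_q u_q$, the terms involving $\nu_q'$ cancel, leaving the classical Hellmann--Feynman type identity
\[
\lambda_q'\,\nu_q u_q \;=\; \nu_q P_q' u_q,
\]
so at $q=1$ one gets $\lambda_1'=\pi P_1'\mathbf{1}$. Since the entries of $P_q$ are $p_{ij}^q$ (with the convention $0^q=0$), a direct computation yields $(P_q')_{ij}=p_{ij}^q\ln p_{ij}$ (where $0\ln 0=0$), and therefore
\[
(P_1'\mathbf{1})_i \;=\; \sum_{j\in\S} p_{ij}\ln p_{ij} \;=\; -(\ln\ell)\,h_\ell(L_i).
\]
Summing against $\pi$ gives $\lambda_1'=-(\ln\ell)\langle\pi\,|\,H\rangle$.

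Finally, I would convert back to $\tau$. Since $\tau(q)=\log_\ell\lambda_q=(\ln\lambda_q)/(\ln\ell)$ and $\lambda_1=1$,
\[
\tau'(1) \;=\; \frac{\lambda_1'}{\ln\ell} \;=\; -\langle\pi\,|\,H\rangle,
\]
and Corollary \ref{tauprime} gives $\dim(\m)=-\tau'(1)=\langle\pi\,|\,H\rangle$. The only delicate point is justifying the smooth branch of eigenvectors with $\nu_1=\pi$ and $u_1=\mathbf{1}$; this is precisely where the simplicity of $\lambda_q$ (already used in Corollary \ref{analytic}) and the irreducibility of $P$ via Perron--Frobenius come in. Once those are in hand, the rest is a short differentiation, and the convention $0^q=0$ together with $0\ln 0=0$ handles possible zero transitions without incident.
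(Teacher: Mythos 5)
Your proof is correct, but it takes a genuinely different route from the paper's. You compute $\tau'(1)$ by first-order perturbation of the Perron eigenvalue: since $\lambda_q$ is a simple eigenvalue of the analytic family $P_q$, the Hellmann--Feynman identity $\lambda_q'=\nu_q P_q' u_q$ applies, and at $q=1$ everything is explicit ($\lambda_1=1$, $u_1=\mathbf{1}$, $\nu_1=\pi$), giving $\lambda_1'=\pi P_1'\mathbf{1}=-(\ln\ell)\,{<}\pi\,\vert\,H{>}$ and hence $\dim(\m)=-\tau'(1)={<}\pi\,\vert\,H{>}$ by Corollary \ref{tauprime}. The paper instead works with the partition entropies $H_n=-\frac1n\sum_{I\in\F_n}\m(I)\log_\ell\m(I)$: it invokes the fact that $-\tau'(1)=\lim_n H_n$ when $\tau'(1)$ exists, derives the exact recursion $T_n=-\sum_k h_\ell(L_k)s_{n-1,k}+T_{n-1}$, and concludes via Ces\`aro convergence of $S_1P^{n}$ to $\pi$ (which handles periodic chains). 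Your argument is shorter and purely spectral, leaning on the eigenvector perturbation already implicit in Corollary \ref{analytic}; the only point you must (and do) flag is the existence of analytic branches $\nu_q,u_q$ with $\nu_q u_q\equiv1$ near $q=1$, which is legitimate since $\pi\mathbf{1}=1\neq0$ and $\lambda_1$ is simple. The paper's route avoids eigenvector perturbation altogether and has a transparent probabilistic meaning (the dimension as an asymptotic entropy rate), at the cost of the extra Ces\`aro-mean step. Both arguments use irreducibility in an essential way and neither requires aperiodicity.
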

\begin{proof}
Let $\ds T_n = \sum_{I\in\F_n} \m(I)\log_\ell \m(I)$, and $H_n=\frac{-1}nT_n$ the entropy related to the partition $\F_n$.
We know that $\tau'(1)$ exists. It follows that $\ds  -\tau'(1) = \lim_{n\to +\infty} H_n$ (see for example \cite{H07}, Theorem 3.1).
Further, 
\begin{align*}
  T_n & = \sum_{k\in\S}\sum_{\e\in\W_{n-1,k}}\sum_{j\in\S} \m(I_\e)p_{kj}\log_\ell(\m(I_\e)p_{kj}) 
  \\ & =
  -\sum_{k\in\S} h_\ell(L_k) \sum_{\e\in\W_{n-1,k}} \m(I_\e) +
  \sum_{k\in\S}\sum_{\e\in\W_{n-1,k}} \m(I_\e)\log_\ell(\m(I_\e)) 
\end{align*}
Denote, as before, $\displaystyle s_{n-1,k}=\sum_{\e\in\W_{n-1,k}} \m(I_\e) $.
It follows that
\[
  T_n = -\sum_{k\in\S} h_\ell(L_k) s_{n-1,k} + T_{n-1}.
\]
Iterating, we get
\[
  \tau'(1) = \lim_{n\to +\infty}  \sum_{k\in\S} h_\ell(L_k) \frac1n (s_{n-1,k} + s_{n-2,k} + \ldots + s_{1,k})
\]
Observe now that $s_{n-1,k}$ is the $k^{\text{th}}$ component of $S_{n-1}=S_1P^{n-2}$ (see \eqref{S_1}). 
It is well known that the Cesaro means converge to the stationary distribution $\pi$ (even in the periodic case). The theorem follows.
\end{proof}

\begin{example}\label{non-uniform}
  Let 
  \[ P = \matrice{cccc}{1/3 &0 &1/3 &1/3\\ 0 &1/2 &0 &1/2 \\ 
           1/2 &1/2&0&0 \\ 1/2 &1/2&0&0}.  
   \] 
   An analytic formula for $\tau(q)$ is complicated. 
   Nevertheless, a numerical evaluation of $\tau(0)$ (which is also the box dimension of the support of $\m$) is possible and Theorem \ref{dot-prod} allows us to estimate the dimension of the measure $\m$. We find
   $$\dim \m \approx 0.58\qquad\mbox{and}\qquad\dim_B(\supp \m) \approx 0.60.$$ 
   This example shows that the ``uniform'' transition densities does not need to imply the maximality of $\dim\m$. Indeed, we will see in Corollary \ref{maximal} that there always exists a choice of the transition matrix for which the dimension of the measure coincides with the dimension of its support.
\end{example}

\section{Multifractal analysis and dimension of the support}\label{multi}

By the definition of $\tau$ we have 
\[\tau(0) = \limsup_{n\to\infty} \frac{\log N_{n}}{n\log\ell},\]
where $N_{n}$ is the number of intervals from $\F_n$ having 
positive measure. In our context the limit exists, so $\tau(0) = \dim_B(\supp\m)$.
Observe that the support of the measure $\m$ doesn't depend on the specific values 
of $p_{ij}$ but only on the configuration of the nonzero entries
in the matrix $P_0$ and in the initial distribution $p=\left( p_0,\cdots,p_{\ell-1}\right)$ . More precisely, the support of the measure $\m$ is the compact set
$$K=\bigcap_{n\ge 1}\bigcup_{p_{\e_1}p_{\e_1\e_2}\cdots p_{\e_{n-1}\e_n}>0}\overline{I_{\e_1\cdots\e_n}}.$$
Indeed, the construction of the support can be viewed as a Cantor-like removal process.
Given an  interval $I_{\e_1\cdots\e_n}$ of the $n^\text{th}$ generation with $\e_n=i$, 
its $j^\text{th}$ subinterval will be removed if and only if $p_{ij}=0$
(cf. Example \ref{Bern} (1)).

According to Theorem \ref{tau-lambda}, $\overline{\dim}_B(\supp\m)=\log_\ell\lambda_0$ where $\lambda_0$ is the spectral radius of the matrix $P_0$, so that the box dimension of $\supp\m$ does not depend on the initial distribution $p$ and only depends on the configuration of the nonzero entries of the matrix $P_0$.

This motivates the following questions.
Given a configuration of nonzero entries of $P_0$ and of the initial distribution $p$, 
which values of $p_{ij}$  maximize $\dim \m$? Is the maximal value of $\dim\m$ equal to the box dimension of the support ?
Is this maximal measure unique?

In some cases one has an immediate answer. In particular, Example \ref{uniform-cantor} says that if each row of the matrix $P$ has the same number $\kappa$ of nonzero entries, the maximum of $\dim\m$ is obtained when each nonzero entry of the matrix $P$ is equal to $1/\kappa$ and is then equal to $\frac{\log\kappa}{\log\ell}$. 

The general answer will be a consequence of the following result which says that the measure $\m$ satisfies the multifractal formalism.

\begin{theorem}\label{multif}
    Let $X$ be an ireductible Markov chain with transition matrix $P$ and let $\m$ be the 
    associated measure. Then $\m$ satisfies the multifractal formalism. More precisely, define
$$E_\alpha=\left\{ x\in[0,1]\ ;\ \lim_{n\to \infty}\frac{\log\m(I_n(x))}{\log\vert I_n(x)\vert}=\alpha\right\}\ .$$
Then, for any $-\tau'(+\infty)<\alpha<-\tau'(\infty)$, 
$$\dim(E_\alpha)=\tau^*(\alpha),$$
where $\tau^*(\alpha)=\inf_q(\alpha q+\tau(q))$ is the Legendre transform of the function $\tau$.
\end{theorem}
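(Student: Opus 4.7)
The plan is to implement the standard ``Gibbs measure'' approach in the Markov setting. For each $q\in\R$, the goal is to build an auxiliary probability measure $\mu_q$ of Markov type satisfying the Gibbs-like estimate
\[
 \mu_q(I_n(x)) \asymp \ell^{-n\tau(q)}\,\m(I_n(x))^q
\]
uniformly in $x$ and $n$, and then to use the ergodic theorem for $\mu_q$ together with a mass distribution principle.

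To construct $\mu_q$, I would apply Perron--Frobenius to the positive irreducible matrix $P_q$: let $\lambda_q$ be its spectral radius, $\nu_q$ its strictly positive left eigenvector and $u_q$ its strictly positive right eigenvector, so that $\nu_qP_q=\lambda_q\nu_q$ and $P_qu_q=\lambda_q u_q$. Define a new stochastic matrix $\widetilde P^{(q)}$ by
\[
 \widetilde p_{ij}^{(q)} = \frac{p_{ij}^q\,u_q(j)}{\lambda_q\,u_q(i)}.
\]
A direct check shows that $\widetilde P^{(q)}$ is irreducible and stochastic, with stationary distribution $\widetilde\pi^{(q)}_i \propto \nu_q(i)u_q(i)$. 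Let $\mu_q$ be the measure associated with this Markov chain via \eqref{m_i}. Telescoping the product of the $\widetilde p^{(q)}_{ij}$ and using the positivity of $u_q$ yields
\[
 \mu_q(I_{\e_1\cdots\e_n}) \asymp \lambda_q^{-n}\,(p_{\e_1\e_2}\cdots p_{\e_{n-1}\e_n})^{q} \asymp \ell^{-n\tau(q)}\,\m(I_{\e_1\cdots\e_n})^{q},
\]
which is exactly the Gibbs property.

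Next I would identify the local behaviour of $\m$ under $\mu_q$. Writing $\log\m(I_n(x)) = \log p_{\e_1}+\sum_{i=1}^{n-1}\log p_{\e_i\e_{i+1}}$ and applying the ergodic theorem to the irreducible chain $\widetilde P^{(q)}$, the Birkhoff average converges $\mu_q$-a.e.\ to $\sum_{i,j}\widetilde\pi^{(q)}_i\widetilde p^{(q)}_{ij}\log p_{ij}$. Differentiating the eigenvalue relation $\nu_qP_qu_q=\lambda_q\,\nu_qu_q$ with respect to $q$ (the cross terms involving $\nu_q'$ and $u_q'$ cancel thanks to the eigenvector equations) gives $\lambda_q'/\lambda_q = \sum_{i,j}\widetilde\pi^{(q)}_i\widetilde p^{(q)}_{ij}\log p_{ij}$, hence
\[
 \lim_{n\to\infty}\frac{\log\m(I_n(x))}{\log|I_n(x)|} = -\frac{1}{\log\ell}\cdot\frac{\lambda_q'}{\lambda_q} = -\tau'(q) \qquad \mu_q\text{-a.e.}
\]
Combined with the Gibbs estimate, this also yields $\lim\log\mu_q(I_n(x))/\log|I_n(x)| = \tau(q)-q\tau'(q) = \tau^*(-\tau'(q))$ for $\mu_q$-a.e.\ $x$. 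Setting $\alpha=-\tau'(q)$ (which, as $q$ ranges over $\R$, covers exactly $(-\tau'(+\infty),-\tau'(-\infty))$ by analyticity of $\tau$ from Corollary \ref{analytic}), we conclude $\mu_q(E_\alpha)=1$, and the mass distribution principle gives $\dim E_\alpha \ge \tau^*(\alpha)$.

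For the reverse inequality I would invoke the general upper bound of the multifractal formalism: for any finite Borel measure and any $q\in\R$ one has $\dim E_\alpha\le \tau^*(\alpha)$ by the standard covering/Markov inequality argument that bounds $\mathcal{H}^{s}(E_\alpha)$ using the sums $\sum_{I\in\F_n}\m(I)^q$ (see e.g.\ \cite{BMP} or \cite{H07}). Since $\tau$ is already identified as $\log_\ell\lambda_q$ by Theorem \ref{tau-lambda}, this upper bound follows with no further work. The step I expect to be the most delicate is the clean verification that the Birkhoff limit of $\sum \log p_{\e_i\e_{i+1}}$ equals $\lambda_q'/\lambda_q$ and hence $-\tau'(q)\log\ell$; this requires some care with the identification $\widetilde\pi^{(q)}_i\propto\nu_q(i)u_q(i)$ and with the differentiation of the Perron eigenvalue, but once settled it ties everything together and pins the support of $\mu_q$ precisely to $E_{-\tau'(q)}$.
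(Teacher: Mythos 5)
Your proposal is correct and its core is identical to the paper's: the same Gibbs measure is built by conjugating $P_q$ with the diagonal matrix of the positive right Perron eigenvector and dividing by $\lambda_q$ (your $\widetilde p^{(q)}_{ij}=p_{ij}^q u_q(j)/(\lambda_q u_q(i))$ is exactly the paper's $Q_q=\lambda_q^{-1}D_q^{-1}P_qD_q$), yielding $\mu_q(I)\asymp|I|^{\tau(q)}\m(I)^q$. The only divergence is that the paper then simply invokes the known implication ``Gibbs measures at every state $q$ plus differentiability of $\tau$ imply the multifractal formalism'' (citing \cite{BMP}, \cite{H07}), whereas you re-prove that implication by hand via the Markov-chain law of large numbers, the perturbation formula $\lambda_q'/\lambda_q=\sum_{i,j}\widetilde\pi^{(q)}_i\widetilde p^{(q)}_{ij}\log p_{ij}$, and the mass distribution principle --- a correct, self-contained elaboration of the step the paper outsources.
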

\begin{proof}
    We will prove the existence of a Gibbs measure at a given state $q$, 
    that is an auxiliary measure $\m_q$
    such that for any $\ell$-adic interval $I$ one has  
    \begin{equation}\label{gibbs}
    \m_q(I) \asymp |I|^{\tau(q)} \m(I)^q.
    \end{equation}
    Note that in \eqref{gibbs}, the constant may depend on $q$. Since the function $\tau$ is differentiable, it is well known that the existence of such a measure at each state $q$  
    implies the validity of the multifractal formalism for $\m$ (see for example \cite{BMP} or \cite{H07}).
    
    Now again, such a Gibbs measure will be obtained with an elementary construction. Note that $P$ is irreductible  
    if and only if $P_q$ is.
    By Perron-Frobenius Theorem, the spectral radius $\lambda_q$ of $P_q$ is a simple eigenvalue and there exists a unique probability vector  $\pi_q$ with strictly positive entries and satisfying $P_q\pi_q=\lambda_q\pi_q$. 
    
    Define $D_q$ as the $\ell\times\ell$-matrix having the coordinates of $\pi_q$ 
    on the diagonal and zeros elsewhere.
    Set $Q_q = \dfrac{1}{\lambda_q}D_q^{-1}P_q D_q$ and let $\mathbf{1}$ be the column vector
    $\mathbf{1}= (1,\ldots,1)^t\in\R^\ell$.
    Then we have 
    \[ Q_q\mathbf{1}  =  \frac{1}{\lambda_q}D_q^{-1}P_qD_q {\mathbf{1}}
        = \frac{1}{\lambda_q}D_q^{-1}P_q \pi_q= D_q^{-1}\pi_q = {\mathbf 1} .\]
    In other words, $Q_q$ is a stochastic matrix and thus it can be associated to a Markov chain 
    $X^{(q)}$.
    We may and do assume that this chain has the initial distribution 
    $q_i=\alpha p_i^q$, $i\in\S$, where $(p_0,\ldots,p_{\ell-1})$ is the initial 
    distribution of $X$ and $\alpha=(p_0^q+\cdots +p_{\ell-1}^q)^{-1}$ is the normalization constant.  Let $\m_q$ be the measure induced by $X^{(q)}$.
    Remark that if $Q_q=(q_{ij})$, $D_q=(d_{ij})$, then we have
    \[q_{ij} = \dfrac{1}{\lambda_q}d_{ii}^{-1} p_{ij}^q d_{jj}, \qquad i,j\in\S. \] 
    Cleraly, $q_{ij}>0 $ if and only if $p_{ij}>0$ and $q_i>0$ if and only if $p_i>0$.
    Therefore the measures $\m_q$ and $\m$ have the same support.
        
    Let $I=I_{\e_1\cdots\e_n}\in\F_n$. We have
    \begin{align*}
        m_q(I) 
        &=  
        q_{\e_1}q_{\e_1\e_2} \cdots q_{\e_{n-1}\e_{n}}
        \\ &=  
        \alpha p_{\e_1}^q\left(\frac{1}{\lambda_q}d_{\e_1\e_1}^{-1}p_{\e_1\e_2}^q d_{\e_2\e_2}\right)
        \cdots
        \left(\frac{1}{\lambda_q}d_{\e_{n-1}\e_{n-1}}^{-1}p_{\e_{n-1}\e_n}^qd_{\e_n\e_n}\right)
        \\ &=
        \frac{\alpha}{\lambda_q^{n-1}}d_{\e_1\e_1}^{-1} m(I_{\e_1\cdots\e_n})^qd_{\e_n\e_n}
    \end{align*}
    Further, since the entries of the the eigenvector $\pi_q$ are strictly positive, 
    there is a constant $c$ (possibly depending on $q$) such that 
    \[ c^{-1}\le\frac{d_{ii}}{d_{jj}}\le c\] 
    for any $i,j\in\S$. 
    This yields \[ m_q(I) \asymp  \frac1{\lambda_q^{n-1}} m(I)^q.\]
    Now, observe that by Theorem \ref{tau-lambda}, $\ell^{\tau(q)}=\lambda_q$ so that
    \[ |I|^{\tau(q)}=\left(\ell^{-n}\right)^{\tau(q)} = \lambda_q^{-n}.\] 
    It follows that for any $I\in\F_n$ we have 
    \[ m_q(I) \asymp |I|^{\tau(q)} m(I)^q.\]
    Note that in the above estimate the implicit constants may depend on $q$ 
    but not on $n$ and $I$. Therefore $m_q$ is the needed Gibbs measure and the theorem follows.
\end{proof}
\begin{cor}\label{maximal}
    Let $K$ be the support of the measure $\m$. The measure $\overline\m=\m_0$ associated to the matrix $Q_0$ satisfies $\supp\overline\m=K$, is monofractal and strongly equivalent to the Hausdorff measure $\mathcal H^{\tau(0)}$ on $K$. In particular,
    $$\dim(\overline\m)=\dim_H(K)=\dim_B(K)$$
    and $\overline\m$ is a measure driven by a Markov chain, with support $K$ and with maximal dimension.
\end{cor}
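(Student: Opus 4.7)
The plan is to specialise the Gibbs measure construction from the proof of Theorem~\ref{multif} to the value $q=0$ and read every claim off the resulting two-sided estimate. With the convention $0^0=0$, the matrix $P_0$ has exactly the same sparsity pattern as $P$, and so does $Q_0$; since the Perron--Frobenius eigenvector $\pi_0$ has strictly positive entries, the stochastic matrix $Q_0$ defines an irreducible Markov chain whose admissible transitions coincide with those of $X$. In particular $\supp\overline{\m}=K$, and the Gibbs estimate~\eqref{gibbs} at $q=0$ reduces to
\[
\overline{\m}(I)\asymp |I|^{\tau(0)}\qquad\text{for every }I\in\F_n\text{ with }\m(I)>0,
\]
the implicit constants being independent of $n$ and $I$.

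Next I would pass from $\ell$-adic cells to arbitrary balls. Given $x\in K$ and $r>0$, choose $n$ with $\ell^{-n-1}\le r<\ell^{-n}$: then $B(x,r)$ is covered by at most three consecutive intervals of $\F_n$ (each of mass $\le Cr^{\tau(0)}$) and contains an $\ell$-adic interval through $x$ of generation $n+O(1)$, so
\[
\overline{\m}(B(x,r))\asymp r^{\tau(0)}\qquad\text{for every }x\in K.
\]
From this uniform two-sided density bound three consequences follow at once. First, the local dimension of $\overline{\m}$ at every $x\in K$ equals $\tau(0)$, so $\overline{\m}$ is monofractal with $\dim\overline{\m}=\tau(0)$. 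Second, the upper bound together with the mass distribution principle applied to $\overline{\m}$ yields $\dim_H K\ge\tau(0)$, and combined with the already-noted $\dim_H K\le\dim_B K=\tau(0)$ this proves $\dim_H K=\dim_B K=\tau(0)$. Third, the same two-sided density estimate, via the standard comparison of a Radon measure of prescribed density with the associated Hausdorff measure, shows that $\overline{\m}$ and $\mathcal{H}^{\tau(0)}\vert_K$ are strongly equivalent Borel measures on $K$.

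Maximality is then automatic: any probability measure $\nu$ driven by a Markov chain with $\supp\nu=K$ satisfies $\dim\nu\le\dim_H(\supp\nu)=\dim_H K=\tau(0)=\dim\overline{\m}$, so $\overline{\m}$ attains the largest possible dimension. The step most deserving of care is the transfer from $\ell$-adic cells to genuine balls: it is uniform only because the multiplicative constants in~\eqref{gibbs} at $q=0$ depend solely on the bounded ratios of the strictly positive entries of $\pi_0$, which is precisely what Perron--Frobenius supplies.
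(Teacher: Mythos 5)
Your proposal is correct and follows essentially the same route as the paper: specialise the Gibbs estimate \eqref{gibbs} to $q=0$ to get $\overline{\m}(I)\asymp|I|^{\tau(0)}$ on $\ell$-adic cells, then invoke the standard density comparison (Billingsley-type theorems) to obtain equivalence with $\mathcal H^{\tau(0)}$ on $K$ and the equality $\dim_H(K)=\dim_B(K)=\tau(0)$. The only cosmetic difference is that the paper reads off monofractality from the structure function $\overline\tau(q)=\tau(0)(1-q)$ and $\dim\overline{\m}=-\overline\tau'(1)$, whereas you deduce it from the uniform pointwise local dimension; both are immediate consequences of the same two-sided estimate.
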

\begin{proof}
    Let $\overline\m = \m_0$ from the previous proof.
    Then we have 
    \[ \overline\m(I) \asymp |I|^{\tau(0)}\]
    for any $\ell$-adic interval $I$ such that $\m(I)>0$. In particular, for any $x\in K$, $\overline\m\left(I_n(x)\right)\asymp \left\vert I_n(x)\right\vert^{\tau(0)}$. 
    By Billingsley's theorem (see e.g. \cite{F2}, Propositions 2.2 and 2.3), 
    we conclude that $\overline\m$ is equivalent to $\tau(0)$-dimensional Hausdorff
    measure $\mathcal{H}^{\tau(0)}$ on $K$. 
    In particular $\mathcal{H}^{\tau(0)}(K)$ is positive and finite. It follows that $$\dim_H(K)=\tau(0)=\dim_B(K).$$ 
    On the other hand, the structure function $\overline\tau$ of the measure $\overline\m$ is $\overline\tau(q)=\tau(0)(1-q)$. It follows that 
    the measure $\overline\m$ is monofractal, and that 
    $$\dim\overline\m=-\overline\tau'(1)=\tau(0)=\dim_B(K).$$
\end{proof}
\begin{example}
Suppose that $X$ is a random walk on $\Z_\ell$ 
(cf. Example \ref{Bern} (3)).
Then $\tau(q) = (1-q)\log_\ell 2$ and 
\[\dim \m = -\tau'(1) =  \log_\ell 2 = \tau(0) = \dim_B(K) .\]
\end{example}

\begin{example}\label{non-max} 
        Let 
        \[ P = \matrice{ccc}{1/3 &1/3 &1/3\\ 1/2 &0 &1/2 \\ 1/2 &0 &1/2}.  \] 
        It can be easily seen that $P$ is irreducible (actually, $P^2$ has only positive entries).  We obtain
        \[ \tau(q) = -\log_3 2 + 
            \log_3\left( 2^{-q}+3^{-q} +\sqrt{4^{-q}+6^{1-q}+9^{-q}}\right) . \]
        Hence 
	\[\dim_H(\supp\m)=\log_3(1+\sqrt{2})\approx 0.802\quad\mbox{and}
        \quad\dim \m = \dfrac{1}{7}(3+4\log_3 2)\approx 0.789.\]
As observed in Example \ref{non-uniform}, the transitions are uniform row by row, but the measure $\m$ is not monofractal. The Markov chain inducing the maximal measure $\overline\m$ is associated to the following transition matrix :
\[ 
   Q_0=(\sqrt{2}-1)\times 
  \matrice{ccc}{1 & \sqrt{2}/2 & \sqrt{2}/2 \\ 
  \sqrt{2} &0 &1 \\
  \sqrt{2} &0 &1 }.   
\]
\end{example}

\section{Invariance, ergodicity and application to the uniqueness of $\overline\m$}\label{secergodic}
The goal of this section is to discuss the uniqueness of measure $\overline\m$ with maximal dimension given in Corollary \ref{maximal}. This is the object of Theorem \ref{unicity}.

We need to start with some preliminary results. Let us introduce the shift $\sigma$ on $[0,1)$ defined by $\sigma(x)=\ell x-E(\ell x)$, where $E(y)$ is the integer part of $y$. Observe that $\sigma\left(I_{\e_1\cdots\e_n}\right)=I_{\e_2\cdots\e_n}$ and that 
$$\sigma(x)=\bigcap_nI_{\e_2\cdots\e_n}\quad\mbox{if}\quad x=\bigcap_nI_{\e_1\cdots\e_n}.$$
That is why $\sigma$ is called the shift.

\begin{prop}
 Let $P$ be an irreducible $\ell\times\ell$ transition matrix, $\nu$ be the (unique) probability vector such that $\nu P=\nu$ and $X$ be the Markov chain with transition $P$ and initial law $\nu$. Set $\m_P$ to be the probability measure driven by the Markov chain $X$. Then, $\m_P$ is $\sigma$-invariant and ergodic.
\end{prop}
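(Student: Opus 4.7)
The plan is to treat the two properties separately: $\sigma$-invariance by checking the definition on a generating $\pi$-system of $\ell$-adic intervals, and ergodicity through a Cesàro-correlation criterion on cylinders combined with the classical asymptotic behaviour of $P^k$ for an irreducible stochastic matrix.

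For invariance, one simply observes that
$$\sigma^{-1}(I_{\e_1\cdots\e_n}) = \bigcup_{k\in\S} I_{k\e_1\cdots\e_n}$$
(disjoint union), so by \eqref{m_i}
$$\m_P(\sigma^{-1}(I_{\e_1\cdots\e_n})) = \sum_{k\in\S}\nu_k\,p_{k\e_1}\,p_{\e_1\e_2}\cdots p_{\e_{n-1}\e_n} = \nu_{\e_1}\,p_{\e_1\e_2}\cdots p_{\e_{n-1}\e_n} = \m_P(I_{\e_1\cdots\e_n}),$$
the middle equality being the stationarity relation $\nu P = \nu$. Since the $\ell$-adic intervals form a $\pi$-system generating the Borel $\sigma$-algebra of $[0,1)$, a Dynkin class argument then extends invariance to all Borel sets.

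For ergodicity I would invoke the standard criterion, a consequence of von Neumann's mean ergodic theorem, that $(\sigma,\m_P)$ is ergodic if and only if
$$\frac{1}{N}\sum_{k=1}^{N}\m_P(A\cap\sigma^{-k}B) \xrightarrow[N\to\infty]{} \m_P(A)\m_P(B)$$
for all $A,B$ in an algebra generating the Borel $\sigma$-algebra; here one takes the algebra of finite unions of $\ell$-adic intervals. Fix cylinders $A = I_{\e_1\cdots\e_n}$ and $B = I_{\delta_1\cdots\delta_m}$. For $k>n$, $A\cap\sigma^{-k}B$ is the disjoint union over $\omega_{n+1},\ldots,\omega_k\in\S$ of the cylinders $I_{\e_1\cdots\e_n\omega_{n+1}\cdots\omega_k\delta_1\cdots\delta_m}$; summing their $\m_P$-measures and collapsing the chain of $k-n+1$ free transitions through $P$ gives
$$\m_P(A\cap\sigma^{-k}B) = \m_P(A)\cdot (P^{k-n+1})_{\e_n\delta_1}\cdot p_{\delta_1\delta_2}\cdots p_{\delta_{m-1}\delta_m}.$$
Since $\m_P(B)=\nu_{\delta_1}\,p_{\delta_1\delta_2}\cdots p_{\delta_{m-1}\delta_m}$, the required convergence reduces to the entrywise statement
$$\frac{1}{N}\sum_{k=1}^{N}(P^k)_{ij}\xrightarrow[N\to\infty]{}\nu_j,\qquad i,j\in\S.$$

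The main and essentially only obstacle is this last convergence. It is classical and relies solely on irreducibility: by Perron--Frobenius, $1$ is a simple eigenvalue of $P$, every other eigenvalue $\lambda$ satisfies $|\lambda|\le 1$ together with $\frac{1}{N}\sum_{k=1}^{N}\lambda^k\to 0$, so $\frac{1}{N}\sum_{k=1}^N P^k$ converges entrywise to the rank-one matrix whose rows all equal $\nu$, which is exactly what is required. The subtle point, worth flagging, is that one must not appeal to the stronger convergence $(P^k)_{ij}\to\nu_j$: it fails as soon as $P$ is periodic, and this is precisely the reason the ergodicity condition is formulated in Cesàro form.
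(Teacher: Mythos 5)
Your proof is correct, but your ergodicity argument takes a genuinely different route from the paper's. The invariance part is identical (same computation on cylinders via $\nu P=\nu$, then a monotone class/Dynkin extension). For ergodicity, the paper avoids the mean ergodic theorem entirely: it picks $k$ with $P+P^2+\cdots+P^k$ strictly positive and proves the two-sided estimate
$$\sum_{j=0}^{k-1}\m_P\bigl(I\cap\sigma^{-(n+j)}(J)\bigr)\asymp\m_P(I)\,\m_P(J)$$
uniformly over $\ell$-adic intervals, extends it to Borel sets by regularity, and applies it directly to a $\sigma$-invariant set $E$ to force $\m_P(E)\,\m_P([0,1]\setminus E)=0$. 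You instead invoke the classical criterion that Ces\`aro convergence of correlations $\frac1N\sum_{k\le N}\m_P(A\cap\sigma^{-k}B)\to\m_P(A)\m_P(B)$ on a generating algebra implies ergodicity, reduce it on cylinders to $\frac1N\sum_{k\le N}(P^k)_{ij}\to\nu_j$, and settle that by Perron--Frobenius; your cylinder computation and your insistence on Ces\`aro (rather than pointwise) convergence to handle periodic chains are both correct. What each approach buys: yours yields the sharper exact product limit and rests on standard textbook ergodic theory, while the paper's is more self-contained (only finite positivity of $P+\cdots+P^k$, no spectral analysis or mean ergodic theorem) and produces as a by-product the weak quasi-Bernoulli inequality, which the paper explicitly flags in a subsequent remark as being of independent interest.
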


\begin{proof}
\begin{align*}
 \m_P(\sigma^{-1}(I_{\e_1\cdots \e_n}))
  &=
  \sum_{j=0}^\ell \m_P(I_{j\e_1\cdots\e_n})\\
  &=
  \sum_{j=0}^\ell \nu_jp_{j\e_1}\cdots p_{\e_{n-1}\e_n}\\
  &=
  \nu_{\e_1}p_{\e_1\e_2}\cdots p_{\e_{n-1}\e_n}\\
  &=
  \m_P(I_{\e_1\cdots \e_n})\\
\end{align*}
So, by the monotone class theorem, the measure $\m_P$ is $\sigma$-invariant.

Let $k$ be an integer such that $P+P^2+\cdots+P^k$ has strictly positive entries. 
We claim that there exists a constant $C>0$ such that for any $I,J\in\bigcup_n\F_n$, we have
\begin{equation}\label{ergodic}
\frac1C \m_P(I)\m_P(J)\le\sum_{j=0}^{k-1}\sum_{K\in\F_j}\m_P(IKJ)\le C \m_P(I)\m_P(J).
\end{equation}

Indeed, if $I=I_{\e_1\cdots\e_n}$, $J=I_{\delta_1\cdots\delta_m}$, and if $(\pi_{ij})_{i,j}$ denote the coefficients of the matrix $P+P^2+\cdots+P^k$, it is easy to check that
$$\sum_{j=0}^{k-1}\sum_{K\in\F_j}\m_P(IKJ)=\nu_{\e_1}p_{\e_1\e_2}\cdots p_{\e_{n-1}\e_n}\times\pi_{\e_n\delta_1}p_{\delta_1\delta_2}\cdots p_{\delta_{m-1}\delta_m}$$
and the claim follows.

Inequality \eqref{ergodic} can be rewritten as
\begin{equation}\label{rewritten}
\sum_{j=0}^{k-1}\m_P\left(I\cap\sigma^{-(n+j)}(J)\right)\asymp\m_P(I)\times\m_P(J)
\end{equation}
where $n$ is the generation of $I$. If we observe that any open set is a countable union of disjoint intervals in $\cup_n\F_n$, inequality \eqref{rewritten} remains true when $J$ is an open set. Finaly, by regularity of the measure $\m_p$, it is also true for any Borel set $J$. In particular, if $E$ is a $\sigma$-invariant Borel set, we get 
$$\forall I\in\bigcup_n\F_n,\qquad k\m_P(I\cap E)\asymp\m_P(I)\times\m_P(E).$$
Again, it remains true when $I$ is an arbitrary Borel set. In particular, 
$$\m_P\left( ([0,1]\setminus E)\cap E\right)\asymp \m_P\left( [0,1]\setminus E\right)\times\m_P(E)$$
which proves that $\m_P(E)=0$ or $\m_P\left( [0,1]\setminus E\right)=0$.
\end{proof}

\begin{remark} Inequality \ref{rewritten} is a particular case of the so called weak quasi-Bernoulli property which was introduced by B. Testud in \cite{T06}. 
\end{remark}

\begin{cor}\label{unicity-ergodic}
Let $P$ and $\tilde P$  be two different irreducible $\ell\times\ell$ transition matrices. Then $\m_P$ is singular with respect to $\m_{\tilde P}$.
\end{cor}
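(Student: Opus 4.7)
The plan is to invoke the classical dichotomy: two ergodic $\sigma$-invariant probability measures are either equal or mutually singular. Since the preceding proposition tells us that both $\m_P$ and $\m_{\tilde P}$ are $\sigma$-invariant and ergodic, it suffices to verify (i) that $P\neq\tilde P$ forces $\m_P\neq\m_{\tilde P}$, and (ii) that distinct ergodic measures are mutually singular.

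For step (i), I would compare the two measures on the $\ell$-adic intervals of the first two generations. If $\m_P=\m_{\tilde P}$, then for each $i\in\S$ we have $\nu_i=\m_P(I_i)=\m_{\tilde P}(I_i)=\tilde\nu_i$, so the stationary distributions agree. Next, for any $i,j\in\S$,
\[
  \nu_i\, p_{ij}=\m_P(I_{ij})=\m_{\tilde P}(I_{ij})=\tilde\nu_i\,\tilde p_{ij}=\nu_i\,\tilde p_{ij}.
\]
Since $P$ is irreducible, Perron--Frobenius guarantees $\nu_i>0$ for every $i$, hence $p_{ij}=\tilde p_{ij}$ for all $i,j$, i.e.\ $P=\tilde P$, contradicting the hypothesis.

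For step (ii), I would use Birkhoff's ergodic theorem. Once we know $\m_P\neq\m_{\tilde P}$, there exists a bounded Borel function $f$ (for instance the indicator of an appropriate $\ell$-adic interval, as used in step (i)) with $\int f\,d\m_P\neq\int f\,d\m_{\tilde P}$. Set
\[
  B_P=\Bigl\{x\in[0,1)\ ;\ \tfrac{1}{n}\sum_{k=0}^{n-1}f(\sigma^k x)\to\textstyle\int f\,d\m_P\Bigr\},
\]
and define $B_{\tilde P}$ analogously. By ergodicity and the Birkhoff theorem, $\m_P(B_P)=1$ and $\m_{\tilde P}(B_{\tilde P})=1$, while $B_P\cap B_{\tilde P}=\emptyset$ because the two limits differ. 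Hence $\m_{\tilde P}(B_P)=0$, which exhibits $B_P$ as a Borel set carrying $\m_P$ and negligible for $\m_{\tilde P}$, i.e.\ $\m_P\perp\m_{\tilde P}$.

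No real obstacle is anticipated: both steps are standard, and the positivity of the stationary vector $\nu$ (required to cancel $\nu_i$ in step (i)) is supplied directly by the irreducibility assumption on $P$ and $\tilde P$ via Perron--Frobenius, which has already been used repeatedly in the paper.
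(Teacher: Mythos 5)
Your proposal is correct and follows essentially the same route as the paper: reduce to showing $\m_P\neq\m_{\tilde P}$ via the standard dichotomy for ergodic invariant measures, then distinguish the measures on the first- and second-generation $\ell$-adic intervals using $\m_P(I_i)=\nu_i$ and $\m_P(I_{ij})=\nu_i p_{ij}$. Your explicit Birkhoff argument and the remark that $\nu_i>0$ is needed to cancel are details the paper leaves implicit, but the substance is identical.
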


\begin{proof}
 According to the ergodic theorem, it suffices to show that $\m_P\not=\m_{\tilde P}$. Let $\nu$ and $\tilde\nu$ be the invariant distributions of the stochastic matrix $P$ and $\tilde P$. If $\nu_i\not=\tilde\nu_i$ for some $i$, then $\m_P(I_i)\not=\m_{\tilde P}(I_i)$. If $\nu=\tilde\nu$ and if $p_{ij}\not=\tilde p_{ij}$, we can write :
$$\m_P(I_{ij})=\nu_ip_{ij}\not=\tilde\nu_i\tilde p_{ij}=\m_{\tilde P}(I_{ij}).$$
\end{proof}

\begin{cor}\label{dichotomy}
Let $P$, $\tilde P$  be two  irreducible $\ell\times\ell$ transition matrices, $p$ and $\tilde p$ to be two probability vectors. Set $\m$ and $\tilde\m$ be the associated measures. Suppose that $\supp(\m)=\supp(\tilde\m)$. Then, there are only two possible cases :
\begin{enumerate}
 \item $P=\tilde P$ and the measures $\m$ and $\tilde\m$ are strongly equivalent (i.e. $\m\asymp\tilde\m$)
  \item $P\not=\tilde P$ and the measures $\m$ and $\tilde\m$ are mutually singular.
\end{enumerate}
\end{cor}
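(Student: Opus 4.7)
The plan is to split according to whether $P=\tilde P$ or not. For $P=\tilde P$ I work directly from the product formula \eqref{m_i}; for $P\neq\tilde P$ I reduce to Corollary \ref{unicity-ergodic} by comparing $\m$ and $\tilde\m$ to the companion measures obtained by replacing the initial laws with the stationary ones.

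Assume first $P=\tilde P$. Formula \eqref{m_i} yields
\[
\frac{\m(I_{\e_1\cdots\e_n})}{\tilde\m(I_{\e_1\cdots\e_n})}=\frac{p_{\e_1}}{\tilde p_{\e_1}}
\]
whenever both are positive. The hypothesis $\supp\m=\supp\tilde\m$ forces the zero/nonzero patterns of $p$ and $\tilde p$ to coincide: if $p_i>0$, then irreducibility of $P$ together with the standing assumption $p_{jk}\neq 1$ produces a cylinder $I_{i\e_2\cdots\e_n}$ of positive $\m$-measure whose closure sits inside the open interval $I_i^\circ$, whence $\supp\m\cap I_i^\circ\neq\emptyset$; by equality of supports the same must hold for $\tilde\m$ and therefore $\tilde p_i>0$. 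The finitely many ratios $p_{\e_1}/\tilde p_{\e_1}$ are then bounded above and below by some constant $C>0$, and a routine monotone class argument extends $C^{-1}\tilde\m(I)\le\m(I)\le C\,\tilde\m(I)$ from $\ell$-adic cylinders to all Borel sets, establishing $\m\asymp\tilde\m$.

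Now suppose $P\neq\tilde P$. Let $\nu$ and $\tilde\nu$ be the stationary distributions of $P$ and $\tilde P$ (strictly positive by Perron--Frobenius), and let $\m_P$ and $\m_{\tilde P}$ denote the measures driven respectively by $(P,\nu)$ and $(\tilde P,\tilde\nu)$. Formula \eqref{m_i} gives
\[
\m(I_{\e_1\cdots\e_n})=\frac{p_{\e_1}}{\nu_{\e_1}}\,\m_P(I_{\e_1\cdots\e_n}),
\]
so the Radon--Nikodym density $d\m/d\m_P$ is piecewise constant on the first-level cylinders and in particular $\m\ll\m_P$; symmetrically $\tilde\m\ll\m_{\tilde P}$. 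Corollary \ref{unicity-ergodic} supplies $\m_P\perp\m_{\tilde P}$, so any pair of disjoint Borel carriers of $\m_P$ and $\m_{\tilde P}$ also carries $\m$ and $\tilde\m$ respectively, yielding $\m\perp\tilde\m$. The only step I expect to need care is the Case 1 translation from equality of supports to equality of the zero patterns of the initial laws, for which the hypothesis $p_{jk}\neq 1$ is the crucial ingredient preventing an active starting state from being attested only by a shared boundary point.
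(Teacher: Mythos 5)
Your proof is correct and follows essentially the same route as the paper: both arguments reduce the singular case to Corollary \ref{unicity-ergodic} by comparing $\m$ and $\tilde\m$ with the companion measures $\m_P$, $\m_{\tilde P}$ built from the stationary initial laws, the only cosmetic difference being that you treat the case $P=\tilde P$ directly via the ratio $p_{\e_1}/\tilde p_{\e_1}$ rather than passing through $\m_P$. You also carefully justify (using irreducibility and $p_{jk}\neq 1$ to place a positive-measure cylinder in the interior of $I_i$) the fact that equality of supports forces the zero patterns of $p$ and $\tilde p$ to coincide, which the paper asserts only parenthetically.
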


\begin{proof}
 Let $\mathcal A\subset\S$ be the set of ranges of the nonzero entries of $p$ (which is also the set of ranges of nonzero entries of $\tilde p$). Let $F=\bigcup_{\e\in\mathcal A}\overline{I_\e}$. We claim that $\m$ is strongly equivalent to the measure $\m_P$ restricted to $F$. This is an easy consequence of the fact that the invariant probability vector $\nu$ (satisfying $\nu P=\nu$) has strictly positive entries. In the same way, $\tilde\m$ is strongly equivalent to the measure $\m_{\tilde P}$ restricted to $F$. Corollary \ref{dichotomy} is then a consequence of Corollary \ref{unicity-ergodic}. 
\end{proof}

Now, we are able to prove the following theorem on the measure $\overline\m$ given by Corollary \ref{maximal}.

\begin{theorem}\label{unicity}
 Let $P_0=\left(p^0_{ij}\right)$ be an irreductible $\ell\times\ell$ matrix such that $ p^0_{ij}\in\{0,1\}$ for any $ij$ and let $p^0=\left(p^0_0,\cdots,p^0_{\ell-1}\right)$ be a line vector such that $p^0_i\in\{ 0,1 \}$ for any $i$. Suppose that $p^0\not=(0,\cdots,0)$ and define the compact set $K$ by
$$K=\bigcap_{n\ge 1}\bigcup_{p^0_{\e_1}p^0_{\e_1\e_2}\cdots p^0_{\e_{n-1}\e_n}=1}\overline{I_{\e_1\cdots\e_n}}.$$
Let $\delta=\dim_H(K)$ and let $\m$ be a measure with support $K$, driven by a Markov chain $X$ with irreductible transition matrix $P$. Then, $\dim\m=\delta$ if and only if 
$$P=\frac1{\lambda_0}D_0^{-1}P_0D_0$$
where $\lambda_0$ is the spectral radius of $P_0$ and $D_0$ is the diagonal matrix whose diagonal entries are the coordinates of the (unique) probability vector $\pi_0$ satisfying $P_0\pi_0=\lambda_0\pi_0$. Moreover, the case $P=\frac1{\lambda_0}D_0^{-1}P_0D_0$ is the only case where the measure $\m$ is  monofractal.
\end{theorem}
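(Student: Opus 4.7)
The sufficiency (that $P=Q_0:=\tfrac{1}{\lambda_0}D_0^{-1}P_0D_0$ implies $\dim\m=\delta$) and the monofractality of the corresponding $\overline{\m}$ are already contained in Corollary \ref{maximal}, so only the two converse implications need attention.

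For the implication $\dim\m=\delta \Rightarrow P=Q_0$, the plan is to optimise the formula of Theorem \ref{dot-prod},
\begin{equation*}
\dim \m \;=\; -\sum_{i,j}\pi_i p_{ij}\log_\ell p_{ij},
\end{equation*}
over the set of irreducible stochastic matrices $P$ with the same zero pattern as $P_0$; here $\pi$ denotes the stationary distribution of $P$, which has only strictly positive entries by irreducibility, and the initial distribution of $X$ does not enter the formula and may be ignored. Writing $\tilde p_{ij}=p^0_{ij}(\pi_0)_j/(\lambda_0(\pi_0)_i)$ for the entries of $Q_0$, one has $\tilde p_{ij}>0$ iff $p_{ij}>0$, so the rows $p_{i\cdot}$ and $\tilde p_{i\cdot}$ are probability vectors on a common support.

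The key step is a short algebraic identity. Expanding
$\log_\ell\tilde p_{ij}=\log_\ell(\pi_0)_j-\log_\ell\lambda_0-\log_\ell(\pi_0)_i$
on the support of $p_{ij}$ and using the invariance $\pi P=\pi$, two sums telescope and one obtains
\begin{equation*}
\sum_{i,j}\pi_ip_{ij}\log_\ell \tilde p_{ij} \;=\; -\log_\ell\lambda_0 \;=\; -\delta.
\end{equation*}
Subtracting this identity from $-\dim\m$ yields
\begin{equation*}
\delta-\dim\m \;=\; \sum_i\pi_i\sum_j p_{ij}\log_\ell\frac{p_{ij}}{\tilde p_{ij}},
\end{equation*}
a nonnegative combination of Kullback--Leibler divergences (in base $\ell$). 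Gibbs' inequality shows that each inner sum vanishes iff $p_{i\cdot}=\tilde p_{i\cdot}$; since every $\pi_i>0$, $\dim\m=\delta$ forces $P=Q_0$ row by row.

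For the last assertion, assume $\m$ is monofractal, so that by the multifractal formalism (Theorem \ref{multif}) the analytic convex function $\tau=\tau_\m$ must be affine. Theorem \ref{tau-lambda} supplies two anchor values, $\tau(0)=\log_\ell\lambda_0=\delta$ and $\tau(1)=\log_\ell 1=0$ (since $P$ is stochastic), which pin the line down to $\tau(q)=\delta(1-q)$; hence $\dim\m=-\tau'(1)=\delta$ and the previous paragraph applies. The main obstacle is isolating the telescoping identity for $\sum\pi_ip_{ij}\log_\ell\tilde p_{ij}$; once it is available, the dimension comparison is a one-line application of Gibbs' inequality, and the monofractal part reduces to the observation that an affine $\tau$ is determined by its values at $0$ and $1$.
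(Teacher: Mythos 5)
Your proof is correct, but your central implication ($\dim\m=\delta\Rightarrow P=Q_0$) follows a genuinely different route from the paper's. The paper argues through the ergodicity results of Section~\ref{secergodic}: if $P\neq Q_0$, Corollary~\ref{dichotomy} makes $\m$ and $\m_0$ mutually singular, so $\ell^{-n\delta}/\m(I_n(x))\to 0$ for $\m$-a.e.\ $x$; an Egoroff argument then produces a uniform estimate $\sum_{J\in\F_{n_0}}\m(IJ)^2\ge\ell^{-n_0(\delta-\eta)}\m(I)^2$ which, iterated over generations, gives $\tau(2)>-\delta$ and hence (by convexity together with $\tau(0)=\delta$, $\tau(1)=0$) both $\dim\m=-\tau'(1)<\delta$ and the non-affineness of $\tau$, i.e.\ multifractality. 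You instead optimise the entropy formula of Theorem~\ref{dot-prod} directly: your telescoping identity $\sum_{i,j}\pi_ip_{ij}\log_\ell\tilde p_{ij}=-\log_\ell\lambda_0$ is correct (the two $\log_\ell(\pi_0)$ sums cancel using $\pi P=\pi$ and the stochasticity of $P$), and it converts $\delta-\dim\m$ into the $\pi$-average of the Kullback--Leibler divergences $D(p_{i\cdot}\,\Vert\,\tilde p_{i\cdot})$, so Gibbs' inequality and $\pi_i>0$ finish the argument. This is the classical variational characterisation of the Parry measure; it is shorter, yields the exact defect $\delta-\dim\m=\sum_i\pi_i D(p_{i\cdot}\,\Vert\,\tilde p_{i\cdot})$, and bypasses Section~\ref{secergodic} entirely for this implication. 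What the paper's route buys in exchange is a direct proof that $\tau$ fails to be affine at $q=2$ whenever $P\neq Q_0$, whereas you deduce the monofractality claim indirectly from the dimension claim via the equivalence between monofractality and affineness of the analytic convex function $\tau$ --- which is legitimate here since $\tau(0)=\delta$ and $\tau(1)=0$ pin an affine $\tau$ to $\delta(1-q)$. Both proofs rely on the same (lightly justified) observation that $\supp\m=K$ forces the zero pattern of $P$ to coincide with that of $P_0$, so that the rows of $P$ and $Q_0$ are probability vectors on a common support.
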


\begin{proof}
 Assume for simplicity that $p_i^0=1$ for any $i$. The general case is a standard modification. Remember that the support of the measure $\m$ only depends on the positions of the non-zero entries of the matrix $P$. It follows the non-zero entries of the matrices  $P$ and $P_0$ hare located  at the same places.  Let $Q_0=\frac1{\lambda_0}D_0^{-1}P_0D_0$. 

Suppose that $P=Q_0$. According to Corollary \ref{maximal} and Corollary \ref{dichotomy}, the measure $\m$ is strongly equivalent to $\overline\m=\m_0$ which  satisfies $\overline\m(I_n(x))\asymp \vert I_n(x)\vert^\delta$ for any $x\in K$. In particular, $\m$  is such that $\dim \m= \delta$ and is monofractal. 

Suppose now that $P\not= Q_0$. Corollary \ref{dichotomy} says that $\m$ is singular with respect to $\m_0$ and we have to prove that $\dim\m<\delta$. Let
$$\tau(q)=\lim_{n\to+\infty}\frac1{n\log\ell}\log\left(\sum_{I\in\F_n}\m(I)^q\right).$$
Recall that $\tau$ is analytic and such that $\tau(0)=\delta$ and $\tau(1)=0$. Using the convexity of $\tau$, it is clear that
$$\tau'(1)=-\delta\Longleftrightarrow \forall q\in[0,1],\ \tau(q)=\delta(1-q)\Longleftrightarrow \forall q\in\R,\ \tau(q)=\delta(1-q).$$
In order to prove that $\dim\m<\delta$, it is then sufficient to establish that $\tau(2)>-\delta$.

Denote by $I_0,\cdots,I_{\ell-1}$ the intervals of the first generation $\F_1$. If $j\in\{ 0,\cdots,\ell-1\}$ and $n\ge 1$,  let $\F_{n}(j)$ be the intervals of $\F_n$ that are included in $I_j$.
The measures $\m$ and $\m_0$ being mutually singular, we know that for $\mbox{d}\!\m$-almost every $x\in K$,
$$\lim_{n\to +\infty}\frac{\m_0(I_n(x))}{\m(I_n(x))}=0,$$
which can be rewritten as
$$\lim_{n\to +\infty}\frac{\ell^{-n\delta}}{\m(I_n(x))}=0.$$
Using Egoroff's theorem in each $I_j$, we can find a set $A\subset K$ such that $\m(A\cap I_j)\ge\frac12 \m(I_j)$ for any $j\in\{0,\cdots,\ell-1\}$ and satisfying
$$\forall\e>0,\quad \exists n_0\ge 1;\quad \forall n\ge n_0,\ \forall x\in A,\quad\m(I_n(x))\ge\frac1\e\ell^{-n\delta}.$$ 

It follows that 
$$\sum_{J\in\F_{n_0}(j)}\m(J)^2\ge\sum_{J\in\F_{n_0}(j)\ ;\ J\cap A\not=\emptyset}\frac1\e\ell^{-n_0\delta}\m(J)\ge\frac{1}{2\e}\ell^{-n_0\delta}\m(I_j).$$

Now, let $I\in\F_k$ and suppose that $I_{\e_1,\cdots,\e_k}$ with $\e_k=i$. Observe that if $J\in \F_{n}(j)$, then $\m(IJ)=\frac{p_{ij}}{\m(I_j)}\m(I)\m(J)$. If we choose $j_i\in\{0,\cdots,\ell-1\}$ such that $p_{ij_i}\not =0$, we get
\begin{eqnarray*}
\sum_{J\in\F_{n_0}}\m(IJ)^2&\ge&\sum_{J\in\F_{n_0}(j_i)}\m(IJ)^2\\
&\ge&\frac{p_{ij_i}^2}{\m(I_{j_i})^2}\m(I)^2\sum_{J\in\F_{n_0}(j_i)}\m(J)^2\\
&\ge&\frac{p_{ij_i}^2}{2\e\m(I_{j_i})}\ell^{-n_0\delta}\m(I)^2.
\end{eqnarray*}
Let $\e=\inf_i\left(\frac{p_{ij_i}^2}{4\m(I_{j_i})}\right)$ and the corresponding $n_0$. If $\eta$ is such that $\ell^{n_0\eta}=2$, we can rewrite the last inequality as
$$\sum_{J\in\F_{n_0}}\m(IJ)^2\ge\ell^{-n_0(\delta-\eta)}\m(I)^2.$$
If we sum this inequality on every interval $I$ of the same generation and iterate the process, we get for any $p\ge 1$
$$\sum_{I\in \F_{pn_0}}\m(I)^2\ge \ell^{-(p-1)n_0(\delta-\eta)}\sum_{I\in \F_{n_0}}\m(I)^2=C\ell^{-pn_0(\delta-\eta)}$$
which gives 
$$\tau(2)\ge -(\delta-\eta)>-\delta.$$
Moreover, it is clear that $\tau(\R)$ is not reduced to a single point. If follows that the measure $\m$ is multifractal.
\end{proof}

\bibliographystyle{amsplain}

\end{document}